\newcommand{\Rmnum}[1]{\expandafter\@slowromancap\romannumeral #1@}
\newtheorem{theorem}{Theorem}[section]
\newtheorem{lemma}{Lemma}[section]
\begin{document}
\title{On the growth factor upper bound for Aasen's algorithm \thanks{This work was supported in part by National Natural Science Foundation of China grant 11671105 and the CSC (grant 201606310121).}}
\author{Yuehua Feng\thanks{School of Mathematical Science, Xiamen University, China. E-mail: {\tt yhfeng@stu.xmu.edu.cn}.}
\and  Linzhang Lu\thanks{School of Mathematical Science, Guizhou Normal University \& School of Mathematical
Science, Xiamen University, China. E-mail: {\tt llz@gznu.edu.cn,}{\tt lzlu@xmu.edu.cn}.}
}
%
\maketitle
\begin{abstract}
 Aasen's algorithm factorizes a symmetric indefinite matrix $A$ as $A = P^TLTL^TP$, where $P$ is a permutation matrix, $L$ is unit lower triangular with its first column being the first column of the identity matrix, and $T$ is tridiagonal. In this note, we provide a growth factor upper bound for Aasen's algorithm which is much smaller than that given by Higham. We also show that the upper bound we have given is not tight when the matrix dimension is greater than or equal to $6$.
\end{abstract}

\medskip
{\small
{\bf Keywords}: Aasen's algorithm, growth factor, $LTL^T$ factorization

\medskip
{\bf AMS subject classifications. 15A23, 65F05, 65G50}
}

\section{Introduction}

Aasen's algorithm \cite{aasen1971reduction} factorizes a symmetric indefinite matrix $A \in \mathbb{R}^{n \times n}$ as
$$PAP^T = LTL^T,$$
where $L$ is unit lower triangular with first column $e_1$ (the first column of the identity matrix), $T$ is symmetric tridiagonal, and $P$ is a permutation matrix chosen such that $\left|l_{ij}\right| \leqslant 1$. As it is well known that we can effectively solve symmetric indefinite linear systems $Ax=b$ when $A$ is a dense matrix, Aasen's algorithm has already been incorporated into LAPACK \cite{anderson1999lapack}. 

For Aasen's algorithm, Higham gave a growth factor upper bound, which is $4^{n-2}$. However, whether this bound is attainable for $n \geqslant 4$ is an open problem \cite{higham2002accuracy}. Cheng \cite{cheng1998symmetric} constructed an example to obtain the growth factor of $4$ for $n=3$, and reported experiments using direct search method in which he obtained growth of $7.99$ for $n=4$ and $14.61$ for $n=5$.

In this note,  we report that the growth factor upper bound for Aasen's algorithm is  $2^{n-1}$, which is much smaller than $4^{n-2}$ given by Higham. Moreover, we also show that the upper bound $2^{n-1}$ we have obtained is not tight when $n \geqslant 6$.

This paper is organized as follows. Section \ref{sec:upper bounds of entries of T} presents the upper bounds on entries of the tridiagonal matrix $T$ in Aasen's algorithm. Section \ref{sec:GWF for Aasen algorithm} reports the growth factor bound for Aasen's algorithm. We draw some conclusions in Section \ref{sec:conclusion}. Without loss of generality, we assume that $\max_{i,j}  \left| a_{i j} \right| = 1$ for the symmetric matrix $A =(a_{ij})_{i,\, j=1}^n \in \mathbb{R}^{n \times n}$ and $A \stackrel{def}{=} P \, A \, P^T$ for convenience.

\section{Upper bounds on entries of \texorpdfstring{$T$}{Lg}}\label{sec:upper bounds of entries of T}

We first present a lemma which gives upper bounds on entries of the factor matrix $T$.

\begin{lemma} \label{Le:bound for tij}
Let $A =(a_{ij})_{i,\, j=1}^n \in \mathbb{R}^{n \times n}$ be symmetric, $\max_{i,j}  \left| a_{i j} \right| = 1$ and suppose Aasen's algorithm produces
\begin{equation}
A \stackrel{def}{=} P \, A \, P^T= L \, T \, L^T,
\end{equation}
where $L=(l_{ij})_{i,\, j=1}^n$ is an unit lower triangular matrix with first column $e_1$, $\left| l_{i j} \right| \leqslant 1$ and $T=(t_{ij})_{i,\, j=1}^n$ is a symmetric tridiagonal matrix. Then the following inequalities hold:
\begin{subequations}
\begin{numcases}{}
\left| t_{11} \right| \leqslant 1, \quad \left| t_{21} \right| \leqslant 1, \quad \left| t_{22} \right| \leqslant 1, \quad  \left| l_{i2} \, t_{21} \right| \leqslant 1, \quad 3 \leqslant i \leqslant n, \label{ineq: bounds of bound is 1}\\
\left| l_{i, \, j-1} \, t_{j-1, \, j} + l_{ij} \, t_{jj} + l_{i, \, j+1} \, t_{j+1, \, j} \right| \leqslant 2 ^{j-2}, \qquad 2 \leqslant j < i \leqslant n, \label{ineq: bounds of bound is power(2,j-2)}\\
\left| l_{n, \, n-1} \, t_{n-1, \, n} + t_{nn} \right| \leqslant 2 ^{n-2}, \label{ineq: bounds of bound is power(2,n-2)}\\
\left| t_{i, \, i-1} \right| \leqslant 2^{i-2} , \qquad \left| t_{i i} \right| \leqslant 2^{i-1},  \qquad 3 \leqslant i \leqslant n. \label{ineq: bounds of bound of t(i,:)}
\end{numcases}
\end{subequations}
\end{lemma}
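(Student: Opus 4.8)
The plan is to pass to the upper Hessenberg matrix $H:=TL^{T}$, so that $A=LH$ and, by the tridiagonality of $T$,
\[
 h_{ri}=(TL^{T})_{ri}=t_{r,r-1}\,l_{i,r-1}+t_{rr}\,l_{ir}+t_{r,r+1}\,l_{i,r+1}
\]
(out-of-range entries taken to be zero), so $h_{ri}=0$ for $r>i+1$. First note that the quantity inside \eqref{ineq: bounds of bound is power(2,j-2)} is exactly $h_{ji}$ (after $t_{j-1,j}=t_{j,j-1}$, $t_{j+1,j}=t_{j,j+1}$), and the one inside \eqref{ineq: bounds of bound is power(2,n-2)} is $h_{nn}$ (the terms with $t_{n,n+1}$ or $l_{n,n+1}$ are absent). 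Next, since the first column of $L$ is $e_{1}$ its first two rows are $e_{1}^{T}$ and $e_{2}^{T}$; comparing the first two columns of $A=LH$ gives $t_{11}=a_{11}$, $t_{21}=a_{21}$, $t_{22}=a_{22}$ and $l_{i2}t_{21}=a_{i1}$ for $i\ge 3$, which with $\max_{i,j}|a_{ij}|=1$ is \eqref{ineq: bounds of bound is 1}.

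First I would record the recursion obtained by equating the $(r,i)$ entries of $A$ and $LH$ whenever $(r,i)$ is on or above the subdiagonal of $H$, i.e.\ $2\le r\le i+1$; using $l_{r1}=0$ and $l_{rr}=1$ the sum truncates to
\[
 h_{ri}=a_{ri}-\sum_{k=2}^{r-1}l_{rk}\,h_{ki}.
\]
Then I would show $|h_{ri}|\le 2^{r-2}$ for all such $(r,i)$ with $r\ge2$, by induction on $r$ (for each fixed $i$): the base $|h_{2i}|=|a_{2i}|\le1$ is immediate, and for $r\ge3$ the inductive step combines $|l_{rk}|\le1$, $|a_{ri}|\le1$ and $1+\sum_{k=2}^{r-1}2^{k-2}=2^{r-2}$. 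Taking $r=j<i$ proves \eqref{ineq: bounds of bound is power(2,j-2)} and $r=i=n$ proves \eqref{ineq: bounds of bound is power(2,n-2)}.

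For the entry bounds \eqref{ineq: bounds of bound of t(i,:)} I would use two exact identities that again follow from the zero pattern of $L$: $l_{i-1,i}=l_{i-1,i+1}=0$ gives $h_{i,i-1}=t_{i,i-1}$, and $l_{i,i+1}=0$, $l_{ii}=1$ give $h_{ii}=t_{i,i-1}l_{i,i-1}+t_{ii}$. The first identity shows $t_{i,i-1}$ is the subdiagonal entry $h_{i,i-1}$ of $H$, so $|t_{i,i-1}|\le 2^{i-2}$ by the previous step. Substituting the second identity into the relation equating the $(i,i)$ entries of $A$ and $LH$ gives
\[
 t_{ii}=a_{ii}-t_{i,i-1}\,l_{i,i-1}-\sum_{k=2}^{i-1}l_{ik}\,h_{ki},
\]
and then $|t_{ii}|\le 1+2^{i-2}+\sum_{k=2}^{i-1}2^{k-2}=1+2^{i-2}+(2^{i-2}-1)=2^{i-1}$.

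The induction in the second step is routine; the main point requiring care — and the reason the constants are $2^{i-2}$ and $2^{i-1}$ rather than twice as large — is keeping exact account of which entries of $L$ are forced to be zero ($l_{r1}$, $l_{i-1,i}$, $l_{i-1,i+1}$, $l_{i,i+1}$). That vanishing is what lets every recursion sum start at $k=2$, so the geometric series collapses to $2^{r-2}$, and it is why $t_{i,i-1}$ must be read off as the \emph{subdiagonal} entry $h_{i,i-1}$ of $H$; extracting it from the superdiagonal $h_{i-1,i}$ instead would only yield the weaker bound $|t_{i,i-1}|\le 2^{i-1}$.
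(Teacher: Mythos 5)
Your proof is correct, and at its core it performs the same entrywise computation as the paper: the identities you get from $A=LH$ with $H=TL^{T}$ are exactly the paper's expansions of $a_{k+1,1}$, $a_{k+1,q}$ and $a_{k+1,k+1}$, and the bound comes from the same telescoping $1+\sum_{k=2}^{r-1}2^{k-2}=2^{r-2}$. The difference is organizational: the paper argues by induction on the matrix dimension $n$ (with an inner chain of inequalities over $q=2,\dots,k$ inside each step, and an appeal to the induction hypothesis to bound $t_{k+1,k}$), whereas you run a single induction on the row index $r$ of the upper Hessenberg factor within one fixed matrix. Your packaging is arguably cleaner: it makes explicit that the quantities in the second and third groups of inequalities are just entries $h_{ji}$ and $h_{nn}$ of $H$, it covers the subdiagonal entry $h_{i,i-1}=t_{i,i-1}$ by the same induction rather than by a separate rearrangement of $a_{k+1,k}$, and it sidesteps the (true but unstated) fact needed for the paper's outer induction that the lemma applies to the leading principal blocks of the factorization. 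All the zero-pattern bookkeeping you flag ($l_{r1}=0$, $l_{rr}=1$, $l_{i-1,i}=l_{i-1,i+1}=l_{i,i+1}=0$) is exactly what the paper uses implicitly, and your arithmetic checks out, so the two proofs establish identical bounds.
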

\begin{proof}
We proceed by induction on $n$. If $n = 3$, the result is trivial (see the example in Cheng \cite{cheng1998symmetric}). Assume that it holds true for $n = k$. Consider $n = k+1$. By direct calculation and $2 \leqslant q \leqslant k$,
\begin{subequations}
\begin{numcases}{}
 a_{k+1, \, 1} = l_{k+1, \, 2} \, t_{21}, \label{eq:a(k+1,1)} \\
 a_{k+1, \, q} = \sum_{j=2}^q l_{q j} \, (l_{k+1, \, j-1} \, t_{j-1, \, j} + l_{k+1, \, j} \, t_{jj} + l_{k+1, \, j+1} \, t_{j+1, \, j}),  \label{eq:a(k+1,q)}  \\
 a_{k+1, \, k+1} = \sum_{j=2}^k l_{k+1,\, j} \, (l_{k+1, \, j-1} \, t_{j-1, \, j} + l_{k+1, \, j} \, t_{jj} + l_{k+1, \, j+1} \, t_{j+1, \, j}) \notag \\
\qquad \qquad \quad + \, l_{k+1, \, k} \, t_{k, \, k+1} + t_{k+1,\, k+1}. \label{eq:a(k+1,k+1)}
\end{numcases}
\end{subequations}

Since $\max_{i,j}  \left| a_{i j} \right| = 1$, then
\begin{align*}
 \left| l_{k+1, \, 2} \, t_{21} \right| \leqslant 1.
\end{align*}

By \eqref{eq:a(k+1,q)} and $| l_{gh} | \leqslant 1 ~(1\leqslant g, \, h \leqslant n)$, then 
\begin{gather*}
 \left| l_{k+1, \, 2} \, t_{22} + l_{k+1, \, 3} \, t_{32} \right| \leqslant 1, \\
 \left| l_{k+1, \, 2} \, t_{23} + l_{k+1, \, 3} \, t_{33} + l_{k+1, \, 4} \, t_{43} \right| \leqslant 2, \\
 \left| l_{k+1, \, 3} \, t_{34} + l_{k+1, \, 4} \, t_{44} + l_{k+1, \, 5} \, t_{54} \right| \leqslant 2^2,  \\
 \vdots \\
 \left| l_{k+1, \, k-2} \, t_{k-2, \, k-1} + l_{k+1, \, k-1} \, t_{k-1, \, k-1} + l_{k+1, \, k} \, t_{k, \, k-1} \right| \leqslant 2^{k-3}, \\
 \left| l_{k+1, \, k-1} \, t_{k-1, \, k} + l_{k+1, \, k} \, t_{kk} + l_{k+1, \, k+1} \, t_{k+1, \, k} \right| \leqslant 2^{k-2}.
\end{gather*}

Furthermore, from \eqref{eq:a(k+1,k+1)}
\begin{equation} \label{ineq: bounds of t(k,k+1)t(k+1,k+1)}
\left| l_{k+1, \, k} \, t_{k, \, k+1} + t_{k+1, \, k+1} \right| \leqslant \sum_{j=2}^k 2^{j-2} +1 = 2^{k-1}.
\end{equation}

Rewrite \eqref{eq:a(k+1,q)} as
\begin{align*}
a_{k+1,  \, k} &= \sum_{j=2}^k l_{k j} \, (l_{k+1, \, j-1} \, t_{j-1, \, j} + l_{k+1, \, j} \, t_{jj} + l_{k+1, \, j+1} \, t_{j+1, \, j}) \\
& = \sum_{j=2}^{k-1} l_{k+1, \, j} \, (l_{k, \, j-1} \, t_{j, \, j-1} + l_{k j} \, t_{j j} + l_{k, \, j+1} \, t_{j, \, j+1}) \\
& + l_{k+1,\, k} \, (l_{k,\, k-1} \, t_{k, \, k-1} + t_{k k}) + t_{k+1, \, k}.
\end{align*}

Apply the induction hypothesis,
\begin{equation*}
\left|t_{k+1, \, k} \right| \leqslant \sum_{j=2}^{k-1} 2^{j-2} + 2^{k-2} + 1 = 2^{k-1},
\end{equation*}
by \eqref{ineq: bounds of t(k,k+1)t(k+1,k+1)},
\begin{equation*}
\left|t_{k+1, \, k+1} \right| \leqslant 2^k.
\end{equation*}

Therefore, when $n = k+1$ the lemma still holds true. We thus prove the lemma.
\end{proof}

\section{Growth factor bound for Aasen's algorithm}\label{sec:GWF for Aasen algorithm}

In this section, we  present our main result. From Lemma \ref{Le:bound for tij} and if $| t_{n,n} | = 2^{n-1}$, without loss of generality, we assume that $t_{n,n}  = 2^{n-1}$, then by \eqref{eq:a(k+1,k+1)} with $k=n-1$,
\begin{equation} \label{eq:t(n,n)+}
\left\{
\begin{aligned}
& l_{nj} \, (l_{n, \, j-1} \, t_{j-1, \, j} + l_{nj} \, t_{jj} + l_{n, \, j+1} \, t_{j+1, \, j}) = -2^{j-2},\quad 2 \leqslant j \leqslant n-1, \\
& l_{n, \, n-1} \, t_{n-1, \, n} = -2^{n-2}, \\
& t_{nn}  = 2^{n-1}.
\end{aligned}
\right.
\end{equation}

By \eqref{eq:t(n,n)+} and \eqref{ineq: bounds of bound is power(2,j-2)} - \eqref{ineq: bounds of bound of t(i,:)}, 
\begin{equation} \label{eq:|l(i,j)|=1}
l_{nj} = \pm 1, \qquad 2 \leqslant j \leqslant n-1.
\end{equation}

Substituting \eqref{eq:t(n,n)+} and \eqref{eq:|l(i,j)|=1} into \eqref{eq:a(k+1,q)} with $k=n-1$, we obtain
\begin{equation*}
\left| \sum_{j=2}^q \, l_{qj} \, l_{nj} \, 2^{j-2} \right| \leqslant 1, \qquad 2 \leqslant q \leqslant n-1,
\end{equation*}
thus
\begin{equation} \label{eq:l(q,j)l(n,j)l(n,q) = -1}
l_{qj} \, l_{nj} \, l_{nq} = -1, \qquad 2 \leqslant j < q \leqslant n-1,
\end{equation}
furthermore,
\begin{equation} \label{eq:l(q,j)l(i,j)l(i,q) = -1}
l_{qj} \, l_{ij} \, l_{iq} = -1, \qquad 2 \leqslant j < q < i \leqslant n-1.
\end{equation}

Next we will derive an upper bound on $t_{nn}$ from \eqref{eq:l(q,j)l(n,j)l(n,q) = -1} and \eqref{eq:l(q,j)l(i,j)l(i,q) = -1}.

 At first, we rewrite \eqref{eq:t(n,n)+} as
\begin{numcases}{}
l_{nj} \, (l_{n, \, j-1} \, t_{j-1, \, j} + l_{nj} \, t_{jj} + l_{n, \, j+1} \, t_{j+1, \, j}) = -2^{j-2} + \delta_{j-2},\quad 2 \leqslant j \leqslant n-1, \notag \\
l_{n, \, n-1} \, t_{n-1, \, n} = -2^{n-2}  + \delta_{n-2}, \label{eq:t(n,n)+ with delta}\\
t_{nn}  = 2^{n-1} - \delta, \notag
\end{numcases}
where $\delta = \sum_{j=0}^{n-2} \, \delta_j ~ \mbox{and} ~  0 \leqslant \delta_j \leqslant 2^{j+1}~ (0 \leqslant j \leqslant n-2)$. Hence the upper bound on $t_{nn}$ is equivalent to the following optimization problem
\begin{equation} \label{initial optimization problem}
\left\{
\begin{aligned}
& \delta = \min \, \sum_{j=0}^{n-2} \, \delta_j \\
& \mbox{s. t. } \quad 0 \leqslant \delta_j \leqslant 2^{j+1}, \quad 0 \leqslant j \leqslant n-2.
\end{aligned}
\right.
\end{equation}

In the following analysis, we will seek more constraints on $\delta_j$. Substituting \eqref{eq:l(q,j)l(n,j)l(n,q) = -1} and \eqref{eq:t(n,n)+ with delta} into \eqref{eq:a(k+1,q)},
\begin{equation} \label{ineq: bounds of all delta}
0 \leqslant \delta_{q-2} - \sum_{j=2}^{q-1} \delta_{j-2} \leqslant 2, \qquad 3 \leqslant q \leqslant n-1.
\end{equation}

From \eqref{eq:l(q,j)l(n,j)l(n,q) = -1} and \eqref{eq:t(n,n)+ with delta},
\begin{equation} \label{eq:t(n,n)+ with delta2}
\left\{
\begin{aligned}
& -l_{j,j-1} \, t_{j-1,j} + t_{jj} - l_{j+1,j} \, t_{j+1,j} = -2^{j-2} + \delta_{j-2},\quad 2 \leqslant j \leqslant n-2, \\
& -l_{n-1,n-2} \, t_{n-2,n-1} + t_{n-1,n-1} = 2^{n-3} + \delta_{n-3} - \delta_{n-2}, \\
& l_{n, \, n-1} \, t_{n-1, \, n} = -2^{n-2}  + \delta_{n-2}, \\
& t_{nn}  = 2^{n-1} - \delta.
\end{aligned}
\right.
\end{equation}

For $2 \leqslant q < i \leqslant n-1$, multiply $l_{iq}$ on both sides of \eqref{eq:a(k+1,q)} with $k=i-1$, then from \eqref{eq:l(q,j)l(i,j)l(i,q) = -1} and \eqref{eq:t(n,n)+ with delta2}, 
\begin{align} \label{eq:l(i,q)a(i,q)}
    l_{iq} \, a_{iq} &= \sum_{j=2}^q l_{iq} \, l_{q j} \, (l_{i, \, j-1} \, t_{j-1, \, j} + l_{ij} \, t_{jj} + l_{i, \, j+1} \, t_{j+1, \, j}) \notag \\
    &=\left\{ \begin{aligned}
    &\sum_{j=2}^{q-1} \left( 2^{j-2} - \delta_{j-2} \right) - 2^{q-2} + \delta_{q-2}, \quad & q < i-1 \\
    &\sum_{j=2}^{q-1} \left( 2^{j-2} - \delta_{j-2} \right) - 2^{q-2} + \delta_{q-2} + 2 \, l_{q+1,\, q} \, t_{q+1,\, q}, \quad & q = i-1
    \end{aligned}
    \right. \notag \\
    &=\left\{ \begin{aligned}
    & -1 + \delta_{q-2} - \sum_{j=2}^{q-1} \delta_{j-2}, \quad & 2 \leqslant q < i-1 \leqslant n-2, \\
    & -1 + \delta_{q-2} - \sum_{j=2}^{q-1} \delta_{j-2} + 2 \, l_{q+1,\, q} \, t_{q+1,\, q}, \quad & 2 \leqslant q = i-1 \leqslant n-2.
    \end{aligned}
    \right.
\end{align}

Since $l_{iq} = \pm 1$ and $|a_{iq}| \leqslant 1$, then
$\left| -1 + \delta_{q-2} - \sum_{j=2}^{q-1} \delta_{j-2} \right| \leqslant 1$ produces the same condition as \eqref{ineq: bounds of all delta}. For the last equality of \eqref{eq:l(i,q)a(i,q)}, we have
\begin{equation} \label{ineq: bounds of l(q+1,q)t(q+1,q)}
- \frac{\delta_{q-2} - \sum_{j=0}^{q-3} \delta_{j}}{2} \leqslant l_{q+1,\, q} \, t_{q+1,\, q} \leqslant 1 - \frac{\delta_{q-2} - \sum_{j=0}^{q-3} \delta_{j}}{2}, \quad 2 \leqslant q \leqslant n-2.
\end{equation}

By \eqref{eq:l(q,j)l(i,j)l(i,q) = -1}, \eqref{eq:t(n,n)+ with delta2} and $k=i-1$, then \eqref{eq:a(k+1,k+1)}  satisfies
\begin{equation}
a_{ii} = \left\{ \begin{aligned}
& 4 \, l_{i,\,i-1} \, t_{i,\,i-1} + l_{i+1,\,i}\, t_{i+1,\,i} - \sum_{j=0}^{i-2} \, \left(2^j - \delta_j \right), \quad & 3 \leqslant i \leqslant n-2, \\
& 4 \, l_{i,\,i-1} \, t_{i,\,i-1} + \sum_{j=0}^{i-2} \, \delta_j - \delta_{i-1} + 1, \quad & i = n-1.
\end{aligned}
\right.
\end{equation}

Since $|a_{ii}| \leqslant 1$, then
\begin{equation} \label{ineq: bounds of l(q,q-1)t(q,q-1)l(q+1,q)t(q+1,q)}
2^{q-1} - 2 - \sum_{j=0}^{q-2} \, \delta_j \leqslant 4 \, l_{q,\,q-1} \, t_{q,\,q-1} + l_{q+1,\,q}\, t_{q+1,\,q} \leqslant 2^{q-1} - \sum_{j=0}^{q-2} \, \delta_j, \quad 3 \leqslant q \leqslant n-2,
\end{equation}
and
\begin{equation} \label{ineq: bounds of l(n-1,n-2)t(n-1,n-2)}
-\frac{1}{2} + \frac{\delta_{n-2}-\sum_{j=0}^{n-3} \, \delta_j}{4} \leqslant l_{n-1,\,n-2} \, t_{n-1,\,n-2} \leqslant \frac{\delta_{n-2}-\sum_{j=0}^{n-3} \, \delta_j}{4}.
\end{equation}

For $3 \leqslant q \leqslant n-2$, from \eqref{ineq: bounds of l(q+1,q)t(q+1,q)} and \eqref{ineq: bounds of l(q,q-1)t(q,q-1)l(q+1,q)t(q+1,q)}, 
\begin{equation} \label{ineq: bounds of l(q,q-1)t(q,q-1)}
\frac{2^{q} -6 - 3 \, \sum_{j=0}^{q-3} \, \delta_j - \delta_{q-2}}{8} \leqslant l_{q,\,q-1}\, t_{q,\,q-1} \leqslant \frac{2^{q} - 3 \, \sum_{j=0}^{q-3} \, \delta_j - \delta_{q-2}}{8}.
\end{equation}

So \eqref{ineq: bounds of l(q+1,q)t(q+1,q)} and \eqref{ineq: bounds of l(q,q-1)t(q,q-1)} are established simultaneously for $3 \leqslant q \leqslant n-2$ when
\begin{equation} \label{ineq: bounds of sum delta(j) - delta(q-3)+ delta(q-2)}
2^q-14 \leqslant 7 \, \sum_{j=0}^{q-4} \delta_{j} - \delta_{q-3} + \delta_{q-2} \leqslant 2^q, \qquad 3 \leqslant q \leqslant n-2.
\end{equation}

Let $q = n-2$ in \eqref{ineq: bounds of l(q+1,q)t(q+1,q)} and from \eqref{ineq: bounds of l(n-1,n-2)t(n-1,n-2)},
\begin{equation*}
    \left\{\begin{aligned}
    & -\frac{\delta_{n-4} - \sum_{j=0}^{n-5} \delta_{j}}{2} \leqslant \frac{\delta_{n-2}-\sum_{j=0}^{n-3} \, \delta_j}{4}, \\
    & -\frac{1}{2}+ \frac{\delta_{n-2}-\sum_{j=0}^{n-3} \, \delta_j}{4} \leqslant 1 -\frac{\delta_{n-4} - \sum_{j=0}^{n-5} \delta_{j}}{2}.
    \end{aligned}
    \right.
\end{equation*}

Hence,
\begin{equation} \label{ineq: bounds of two l(n-1,n-2)t(n-1,n-2)}
-6 \leqslant 3 \, \sum_{j=0}^{n-5} \delta_{j} - \delta_{n-4} + \delta_{n-3} - \delta_{n-2} \leqslant 0.
\end{equation}

Therefore plugging \eqref{ineq: bounds of all delta}, \eqref{ineq: bounds of sum delta(j) - delta(q-3)+ delta(q-2)} and \eqref{ineq: bounds of two l(n-1,n-2)t(n-1,n-2)} into the optimization problem \eqref{initial optimization problem},
\begin{equation} \label{optimization problem with more conditions}
\left\{
\begin{aligned}
& \delta = \min \, \sum_{j=0}^{n-2} \, \delta_j & \\
& \mbox{s. t. } \quad 0 \leqslant \delta_j \leqslant 2^{j+1},  & 0 \leqslant j \leqslant n-2, \\
& \quad \quad ~0 \leqslant \delta_{q-2} - \sum_{j=0}^{q-3} \delta_{j} \leqslant 2, & 3 \leqslant q \leqslant n-1, \\
& \quad \quad ~ 2^q-14 \leqslant 7 \, \sum_{j=0}^{q-4} \delta_{j} - \delta_{q-3} + \delta_{q-2} \leqslant 2^q, & 3 \leqslant q \leqslant n-2, \\
& \quad \quad ~ -6 \leqslant 3 \, \sum_{j=0}^{n-5} \delta_{j} - \delta_{n-4} + \delta_{n-3} - \delta_{n-2} \leqslant 0, & n \geqslant 4.
\end{aligned}
\right.
\end{equation}

Since $2^q -14 > 0$ ($q \geqslant 4$) and from \eqref{optimization problem with more conditions}, it is easy to obtain
\begin{equation}
\left\{\begin{aligned}
& \delta = 0, & n \leqslant 5, \\
& \delta > 0, & n \geqslant 6.
\end{aligned}
\right.
\end{equation}

Therefore, by \eqref{eq:t(n,n)+ with delta}, $t_{nn} < 2^{n-1}$ when $n \geqslant 6$. A similar argument shows that $t_{nn} > -2^{n-1}$ when $n \geqslant 6$. Hence, $| t_{nn} | < 2^{n-1}$ when $n \geqslant 6$, this  motivates the following Theorem.

\begin{theorem} \label{Th:growth factor bound for LTL^T factorization}
Let a symmetric indefinite matrix $A = (a_{ij})\in \mathbb{R}^{n \times n}$, then the growth factor bound for Aasen's algorithm is $2^{n-1}$, but that is not tight when $n \geqslant 6$.
\end{theorem}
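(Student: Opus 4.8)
I would prove the theorem in two stages: an upper bound that is immediate from Lemma~\ref{Le:bound for tij}, and a non-tightness claim for $n\ge 6$ that is essentially the computation already displayed in this section; the proof is then mostly a matter of assembling these two pieces.

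\emph{The upper bound.} By the standard backward error analysis of Aasen's algorithm, the growth factor is controlled by $\max_{i,j}|t_{ij}|$ together with the sizes of the intermediate products $h_{ij}=l_{i,j-1}t_{j-1,j}+l_{ij}t_{jj}+l_{i,j+1}t_{j+1,j}$ that are formed while building the factorization. Lemma~\ref{Le:bound for tij} gives $|t_{11}|,|t_{21}|,|t_{22}|\le 1$, $|t_{i,i-1}|\le 2^{i-2}$ and $|t_{ii}|\le 2^{i-1}$, and, via \eqref{ineq: bounds of bound is power(2,j-2)}--\eqref{ineq: bounds of bound is power(2,n-2)}, $|h_{ij}|\le 2^{j-2}\le 2^{n-2}$ for $i>j$ and $|h_{ij}|\le 2^{n-1}$ for $i\le j$; hence all of these quantities are at most $2^{n-1}$, and the value $2^{n-1}$ can be reached only by $t_{nn}$. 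Since $\max_{i,j}|a_{ij}|=1$ this yields the growth factor bound $2^{n-1}$. (Cheng's $3\times3$ example attains it, so it is sharp for $n=3$, and the slack analysis below does not exclude tightness for $n=4,5$.)

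\emph{Non-tightness for $n\ge 6$.} Here I would argue by contradiction along the lines already carried out. Assume the growth factor equals $2^{n-1}$; by the previous paragraph this forces $|t_{nn}|=2^{n-1}$, and after replacing $A$ by $-A$ we may take $t_{nn}=2^{n-1}$. Forcing equality through \eqref{eq:a(k+1,k+1)} with $k=n-1$ gives \eqref{eq:t(n,n)+}, hence $|l_{nj}|=1$ for $2\le j\le n-1$ by \eqref{eq:|l(i,j)|=1}, and then the sign relations \eqref{eq:l(q,j)l(n,j)l(n,q) = -1}--\eqref{eq:l(q,j)l(i,j)l(i,q) = -1}. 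Writing $t_{nn}=2^{n-1}-\delta$ with the nonnegative slacks $\delta_j$ of \eqref{eq:t(n,n)+ with delta} and imposing $|a_{ij}|\le1$ on the remaining entries produces the constraints \eqref{ineq: bounds of all delta}, \eqref{ineq: bounds of sum delta(j) - delta(q-3)+ delta(q-2)} and \eqref{ineq: bounds of two l(n-1,n-2)t(n-1,n-2)}, i.e. the feasible set of the linear program \eqref{optimization problem with more conditions}. When $n\ge6$ the value $q=4$ lies in the range $3\le q\le n-2$, and the constraint $2=2^4-14\le 7\delta_0-\delta_1+\delta_2$ together with $\delta_j\ge0$ rules out $\delta_0=\delta_1=\delta_2=0$; hence the optimum $\delta$ of \eqref{optimization problem with more conditions} is strictly positive, so $t_{nn}=2^{n-1}-\delta<2^{n-1}$, contradicting $t_{nn}=2^{n-1}$. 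Since every entry of $T$ other than $t_{nn}$, and every $h_{ij}$, has modulus strictly below $2^{n-1}$ for $n\ge6$, the growth factor is then strictly below $2^{n-1}$, so the bound $2^{n-1}$ is not tight.

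The main obstacle is not computational -- expanding \eqref{eq:a(k+1,q)}--\eqref{eq:a(k+1,k+1)}, substituting the sign relations, and reading off each inequality on the $\delta_j$ is routine and already done -- but in making the reduction rigorous: I would need to check carefully that the extremal case $|t_{nn}|=2^{n-1}$ really does force the rigid structure ($|l_{nj}|=1$ and the sign relations), so that the slacks $\delta_j$ genuinely parametrize the gap to the maximum and the linear program \eqref{optimization problem with more conditions} faithfully bounds $\sup_A t_{nn}$, and then that for $n\ge6$ this bound equals $2^{n-1}-\delta$ with $\delta>0$, so that $2^{n-1}$ is not merely unattained but actually improvable.
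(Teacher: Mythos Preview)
Your proposal is correct and follows essentially the same route as the paper: the upper bound is read off directly from Lemma~\ref{Le:bound for tij}, and the non-tightness for $n\ge 6$ is the contradiction argument displayed in Section~\ref{sec:GWF for Aasen algorithm}, culminating in the observation that the constraint $2^q-14\le 7\sum_{j=0}^{q-4}\delta_j-\delta_{q-3}+\delta_{q-2}$ at $q=4$ forces $\delta>0$. Your closing caveat about whether the argument yields a strict improvement on $2^{n-1}$ (versus mere non-attainment) is a fair observation that applies equally to the paper's own presentation.
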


We construct examples with $n = 4$ and $n=5$ such that the upper bounds are attainable. 
\begin{align*}
   A&=\begin{pmatrix}
      1 & 1 & -1 & 1 \\
      1 & \frac{\delta}{2} - 1 & 1 & \delta - 1 \\
     -1 & 1 & 1 & -1 \\
      1 & \delta - 1 & -1 & 1
   \end{pmatrix} \\
   &=
    \begin{pmatrix}
         1  &      &     &   \\
         0   &  1  &      &   \\
         0  &  -1  &   1  &    \\
         0  &   1  &   \mathbf{1}  &   1
    \end{pmatrix}
    \begin{pmatrix}
    1 & 1 &  &   \\
    1 & -1+\frac{\delta}{2} & \mathbf{\frac{\delta}{2}} & \\
     &  \mathbf{\frac{\delta}{2}} & 2+\frac{\delta}{2} &    -4 \\
     & & -4 & 8-2\delta
    \end{pmatrix}
    \begin{pmatrix}
    1  &   0   &  0   & 0  \\
            &  1  &   -1   & 1  \\
           &    &   1  &  \mathbf{1}  \\
           &     &     &   1
    \end{pmatrix},
\end{align*}
and
{\small
\begin{align*}
    A&=\begin{pmatrix}
      1 &  1 & 1 & 1 & -1 \\
      1 & \frac{\delta}{4} & 1 - \frac{\delta}{2} & \delta - 1 & 1 - \delta \\
      1 & 1 - \frac{\delta}{2} & 1 & 1 & 2\delta - 1 \\
      1 & \delta - 1 & 1 & 1 & -1 \\
     -1 & 1 - \delta & 2\delta - 1 &  -1 & 1
    \end{pmatrix} \\
    &=\begin{pmatrix}
    1 &  &   &   &   \\
    0 & 1 &  &   &   \\
    0 & 1 & 1 &  &   \\
    0 & 1 & -1 & 1 &  \\
    0 & -1 & 1 & \mathbf{1} &   1
    \end{pmatrix}
    \begin{pmatrix}
    1 & 1 & & & \\
    1 &  \frac{\delta}{4} & 1 & & \\
    & 1-\frac{3\delta}{4} & -1+\frac{5\delta}{4} & \mathbf{\delta} & \\
    & & \mathbf{\delta} & 4-\delta  & -8+4\delta \\
    & & & -8+4\delta & 16-12\delta
    \end{pmatrix}
    \begin{pmatrix}
    1 & 0 & 0 & 0 & 0 \\
     &  1 & 1 & 1 & -1   \\
     &   &  1 & -1 & 1 \\
     &   &  & 1 & \mathbf{1} \\
     &  &    &  &   1
    \end{pmatrix}.
\end{align*}
}

When $\delta \to 0^+$ in these two matrices $A$, the upper bounds are attainable. 
For the following $n=6$ example,

\begin{align*}
A&=\begin{pmatrix}
     1 &  1 &  1 & 1 & 1 & -1 \\
 1 & \delta/2 - 3/4 & -1/2 & \delta - 1 & \delta - 1 & 1 - \delta \\
 1 & -1/2 & -1 & -1 & 1 & -1 \\
 1 & \delta - 1 & -1 & 5 \delta - 3 & 1 & 2 \delta - 1 \\
 1 & \delta - 1 & 1 & 1 & 1 & -1 \\
-1 & 1 - \delta & -1 & 2 \delta - 1 & -1 & 1
\end{pmatrix} \\
&=\begin{pmatrix}
    1 &  &   &   & &     \\
    0 & 1 &  &   &  &  \\
    0 & 1 & 1 &  &   &  \\
    0 & 1 & -1 & 1 & &  \\
    0 & 1 & -1 & -1 & 1 & \\
    0 & -1 & 1 & 1 & 1 & 1
    \end{pmatrix}
    \begin{pmatrix}
    1 & 1 & & & & \\
    1 &  -\frac{3}{4}+\frac{1}{2}\delta & \frac{1}{4}-\frac{1}{2}\delta & & & \\
    & \frac{1}{4}-\frac{1}{2}\delta & -\frac{3}{4}+\frac{1}{2}\delta & -1 & & \\
    & & -1 & -3+3 \delta  & \delta & \\
    & & & \delta & 8-3\delta & -16+8\delta \\
    & & & & -16+8\delta & 32-20 \delta
    \end{pmatrix}\\
    &\qquad \begin{pmatrix}
    1 &  &   &   & &     \\
    0 & 1 &  &   &  &  \\
    0 & 1 & 1 &  &   &  \\
    0 & 1 & -1 & 1 & &  \\
    0 & 1 & -1 & -1 & 1 & \\
    0 & -1 & 1 & 1 & 1 & 1
    \end{pmatrix}^T.
\end{align*}

Since $\left| a_{ij} \right| \leq 1$, then $\frac{2}{5}
\leq \delta \leq \frac{4}{5}$ and the growth factor is $24<2^5$ when $n=6$.

\section{Conclusion} 
\label{sec:conclusion}
Variants of Aasen's algorithm have been incorporated into LAPACK, as they can be effectively applied to solve symmetric indefinite linear systems.
We estimate the growth factor upper bound for Aasen's algorithm by direct computation and obtain the value $2^{n-1}$, which is much smaller than that given by Higham. We also prove that the bound is not tight when the matrix size is greater than $6$. Besides, some matrix examples are constructed to verify our theoretical analysis with $n=4,~5,~6$.

\bibliographystyle{unsrt}
\bibliography{rcp.bib}

\begin{thebibliography}{1}

\bibitem{aasen1971reduction}
Jan~Ole Aasen.
\newblock On the reduction of a symmetric matrix to tridiagonal form.
\newblock {\em BIT Numerical Mathematics}, 11(3):233--242, 1971.

\bibitem{anderson1999lapack}
Edward Anderson, Zhaojun Bai, Christian Bischof, L~Susan Blackford, James
  Demmel, Jack Dongarra, Jeremy Du~Croz, Anne Greenbaum, Sven Hammarling, Alan
  McKenney, et~al.
\newblock {\em LAPACK Users' guide}.
\newblock SIAM, 1999.

\bibitem{higham2002accuracy}
Nicholas~J Higham.
\newblock {\em Accuracy and stability of numerical algorithms}.
\newblock SIAM, 2002.

\bibitem{cheng1998symmetric}
Sheung~Hun Cheng.
\newblock {\em Symmetric indefinite matrices: Linear system solvers and
  modified inertia problems}.
\newblock PhD thesis, University of Manchester, 1998.

\end{thebibliography}

\end{document}